 \theoremstyle{plain}
 \newtheorem{thm}{Theorem}[section]
   \newtheorem*{thm*}{Theorem}
 \newtheorem*{thm4.5}{Theorem 4.5}
 \newtheorem{lem}[thm]{Lemma}
 \newtheorem{prop}[thm]{Proposition}
 \newtheorem*{prop*}{Proposition}
 \newtheorem{conj}[thm]{Conjecture}
 \newtheorem*{conj*}{Conjecture}
 \theoremstyle{definition}
 \newtheorem{defi}[thm]{Definition}
 \newtheorem{oss}{Remark}
 \newtheorem*{Notation}{Notation}
 \theoremstyle{remark}
\newcommand{\Z}{\mathbb{Z}}
\newcommand{\F}{\mathbb{F}}
\newcommand{\stab}{\mathrm{stab}}
\newcommand{\GL}{\mathrm{GL}}
\def\cP{\mathcal{P}}
\def\cF{\mathcal{F}}
\begin{document}

\begin{titlepage}
\title {M\"obius  function of the subgroup lattice of a finite group and Euler characteristic  }
\author {Francesca Dalla Volta}
\address{Francesca Dalla Volta: Dipartimento di Matematica e Applicazioni, Universit\`a degli Studi di Milano-Bicocca, via Roberto Cozzi 55, 20125 Milano, Italy}
\email {francesca.dallavolta@unimib.it}
\author{Luca Di Gravina} \thanks{The authors are members of the Italian National Group for Algebraic and Geometric Structures and their Applications (GNSAGA - INdAM) and  thank INdAM  for the support.  The second author is also a member of the research training group {\it GRK 2240: Algebro-Geometric Methods in Algebra, Arithmetic and Topology}, funded by DFG}\address {Luca Di Gravina: Mathematisches Institut, Heinrich-Heine-Universit\"at  D\"usseldorf, 40225 D\"usseldorf, Germany
}
\email{luca.di.gravina@hhu.de}
\end{titlepage}
	
\begin{abstract}
The M\"{o}bius function of the subgroup lattice of a finite group
has been introduced by Hall  and applied to investigate several questions.
In this paper, we consider the M\"obius function defined on an order ideal related to the lattice of the subgroups of an irreducible subgroup $G$ of the general linear group $\GL(n,q)$ acting on the $n$-dimensional vector space $V=\mathbb{F}_q^n$, where $\F_q$ is the finite field with $q$ elements. 
We find a relation between this function  and the Euler characteristic of two simplicial complexes $\Delta_1$ and $\Delta_2$, the former  raising from the lattice of the subspaces of $V$, the latter from the subgroup lattice of $G$.
\end{abstract}

\maketitle	
	
\begin{small}

\noindent{\bf Keywords:} M\"obius function, subgroup lattice, linear groups, Euler characteristic, simplicial complexes. \smallskip

\noindent{\bf 2020 MSC:} 20B25, 20D60, 05E16, 05E45 \smallskip

\noindent {\bf Corresponding author:} Francesca Dalla Volta, \email{francesca.dallavolta@unimib.it}

\end{small}

\setcounter{page}{1}
\section{Introduction}

In this paper, motivated by recent and less recent results about  the M\"obius function $\mu $  for the subgroup lattice $\mathcal{L}(G)$ of a finite group $G$, we give a
result which  relates the M\"obius function for a subgroup $G$ of $\GL(n,q)$ to two  simplicial complexes: one  defined from the lattice of the subspaces fixed by a reducible subgroup $H\leq G$ and the second from the lattice $\mathcal {L}(G)$ of the subgroups of $G$.
 
We will introduce in the Preliminaries all the definitions and details useful for reading the paper.

In his PhD thesis \cite{shareshian96}, Shareshian  
considers the problem of computing  $\mu(1,G)$ for several finite classical groups $G$; the idea is to  approximate $\mu(1,G)$  through a \emph{good function} $f_{G,n,p}(u,1)\,$, such that: 
\begin{equation}\label{eq_MobShareshian1}
\mu(1,G(n,\,p^u))=f_{G,n,p}(u,1)+\sum_{K\in\mathcal{C}_9}\mu(1,K)\,. 
\end{equation} 
Here, $G=G(n,p^u)$ denotes a family of finite classical groups with the same defining classical form, which act in a natural way on the vector space $V$ of finite dimension $n$ over the finite field $\F_q$  of order $q=p^u\,$. If  $\,\mathcal{C}_1,\dots,\mathcal{C}_8,\mathcal{C}_9\,$ are the Aschbacher classes of maximal subgroups of a finite classical group (see \cite{kleidlieb90}), $\mathcal{C}_9$  is the class of almost simple groups not belonging to the first $8$ classes of  ``geometric type'' and the function $f_{G,n,p}(u,1)\,$ provides an estimate of $\mu(1,G)$ with respect to the contributions given by the subgroups of $G$ which belong to the classes $\mathcal{C}_i\,$, for $i\in\{1,\dots,8\}$. 

Actually, Shareshian's approach focuses on the first class $\mathcal{C}_1(G)$, that is the class of reducible subgroups of $G$. 

In particular, the reducible subgroups of $G$ contribute to $f_{G,n,p}(u,1)$ through the computation of the M\"obius function of
\begin{align*}
\widehat{\mathcal{I}}_1(G) &:=\{ K\leq G\mid  K\leq M \text{ for some } M\in \mathcal{C}_1(G) \} \cup \{G\}\,,
\end{align*}
which is obtained by adjoining the maximum $G$ to the order ideal
$${\mathcal{I}}_1(G):=\{ K\leq G\mid  K\leq M \text{ for some } M\in \mathcal{C}_1(G) \}\,,$$ 
 that is
$$\mu_{\widehat{\mathcal{I}}_1(G)}(1,G)=  -\sum_{\substack{K\in\mathcal{I}_1(G)}}\mu(1,K)\,$$ \\
 and \begin{align}\label{eq_MobShareshian2}
\mu(1,G)=\mu_{\widehat{\mathcal{I}}_1(G)}(1,G)  -\sum_{\substack{K<G \\ K\notin\mathcal{I}_1(G)}}\mu(1,K)\,.
\end{align}

In this paper, we will consider 
irreducible subgroups $G$ of the general linear group $\GL(n,q)$, that is groups of linear automorphisms of a vector space $V$ of dimension $n$ over the finite field $\F_q$ with $q$ elements, which  fix no non-trivial subspace of $V$. In this hypothesis, we will take a reducible subgroup $H$ of $G$ (that is, $H$ fixes some proper subspace of $V$) and we will work on  the analogue of $\mu_{\widehat{\mathcal{I}}_1(G)}(1,G)$, namely  $\mu_{\widehat{\mathcal{I}}(G,H)}(H,G)$, so that \begin{align}\label{eq_MobShareshianH}
\mu(H,G)=\mu_{\widehat{\mathcal{I}}(G,H)}(H,G)  -\sum_{\substack{K\notin\mathcal{I}(G,H)\\ H\leq K<G}}\mu(H,K)\, 
\end{align} (see Sections 2 and 3 for all precise definitions).

The subject of this paper is somehow motivated by the following conjecture:

\begin{conj}[Mann, \cite{mann05}]\label{MannConj} 
	Let $G$ be a PFG group and $\mu$ the M\"obius function on the lattice of open subgroups of $G$. Then $|\mu(H,G)|$ is bounded by a polynomial function in the index $|G:H|$ and  the number of subgroups of $G$ of index $m$ with $\mu (H,G)\neq 0$ grows at most polynomially in $m$. 
\end{conj}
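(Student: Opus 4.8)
The plan is to treat Conjecture~\ref{MannConj} as a reduction problem: peel off the contributions to $\mu(H,G)$ according to the structure of $G$ and bound each piece uniformly in the index. The starting point is the defining recursion $\sum_{H\le K\le G}\mu(K,G)=0$ for $H<G$, which already forces $\mu(H,G)\ne 0$ only when $H$ is an intersection of maximal open subgroups of $G$ (the profinite analogue of Hall's classical observation), and which gives $\mu(M,G)=-1$ whenever $M$ is maximal. So I would first stratify the open $H\le G$ with $\mu(H,G)\ne 0$ by their \emph{corank}, the least number of maximal subgroups needed to cut out $H$, and try to establish both the polynomial bound on $|\mu(H,G)|$ and the polynomial count of the relevant $H$ level by level in the corank, using the recursion to express level $r$ in terms of level $r-1$.

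The first level is controlled by the PFG hypothesis: by the Mann--Shalev criterion together with its proof by Jaikin-Zapirain and Pyber, a profinite group is PFG if and only if it has polynomial maximal subgroup growth, so the number of maximal open subgroups of index $m$ is at most $m^{c}$ for some $c=c(G)$, which bounds both the number and the total Möbius contribution of the corank-one subgroups of each index. The substance is to propagate this to higher corank. For that I would work along a chief series of $G$, reducing the computation of $\mu(H,G)$ for $H$ of large index to Möbius numbers of the (finite, essentially almost simple) chief factors and to intervals inside $\GL(n,q)$-type sections, with $n$ and $q$ controlled by the PMSG constant; this is what localizes the whole question inside finite linear groups and makes the machinery of the present paper relevant.

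It is exactly here that the mechanism developed in this paper enters. For an irreducible $G\le\GL(n,q)$ and a reducible $H\le G$, equation~\eqref{eq_MobShareshianH} splits $\mu(H,G)$ into the $\mathcal{C}_1$-part $\mu_{\widehat{\mathcal{I}}(G,H)}(H,G)$ and a remainder supported on $K\notin\mathcal{I}(G,H)$; the first summand is the quantity we express through the Euler characteristics of $\Delta_1$ (coming from the subspace lattice of $V$) and $\Delta_2$ (coming from $\mathcal{L}(G)$). Since the order complex of the subspace lattice of $\F_q^{n}$ has reduced homology concentrated in top degree with rank a Gaussian-type quantity, its Euler characteristic is a polynomial in $q$ of degree bounded in terms of $n$; this yields a polynomial bound on the $\mathcal{C}_1$-contribution, and after the analogous but easier treatment of the remaining geometric classes $\mathcal{C}_2,\dots,\mathcal{C}_8$ via Aschbacher's theorem — each containing only polynomially many subgroups of a given index — one is left only with the $\mathcal{C}_9$-part. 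The enumeration statement of the conjecture would be attacked in parallel, bounding the number of Möbius-relevant $H$ inside each geometric class by the number of the corresponding geometric configurations, again polynomial in the index.

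The main obstacle — and the reason the conjecture is still open in general — is the $\mathcal{C}_9$ contribution, i.e. the almost simple groups acting irreducibly, together with the passage from a general PFG group back down to such sections. To close the argument one needs a \emph{uniform} polynomial bound of the shape $|\mu(1,S)|\le|S|^{O(1)}$ (equivalently, polynomial in the minimal index of a proper subgroup of $S$) for every finite almost simple $S$, and a matching polynomial bound on the number of subgroups of $S$ with nonzero Möbius number; both depend on the classification of finite simple groups and are not presently available in the generality required. I therefore expect the crux to be precisely this almost-simple input, with the complex-of-subspaces computation of the present paper cleanly and polynomially disposing only of the reducible ($\mathcal{C}_1$) portion of the problem.
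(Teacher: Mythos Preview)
The statement you are trying to prove is labeled in the paper as a \emph{Conjecture}, and the paper does not contain a proof of it; on the contrary, the text immediately following the statement says explicitly that ``the problem is still open in its general setting'' and only records Lucchini's reduction to almost simple groups. So there is no ``paper's own proof'' to compare against, and what you have written is not a proof either: it is a strategy outline that, as you yourself acknowledge in the last paragraph, terminates at exactly the missing ingredient (a uniform polynomial bound on $|\mu(1,S)|$ and on the number of M\"obius-relevant subgroups for all finite almost simple $S$). That gap is real and is the whole difficulty; nothing in your sketch closes it, so the proposal does not establish the conjecture.

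There is also a technical slip in the part of your outline that invokes the machinery of the present paper. You write that the $\mathcal{C}_1$-contribution is controlled because ``the order complex of the subspace lattice of $\F_q^{n}$ has reduced homology concentrated in top degree,'' but the complex $\Delta_1$ of Theorem~4.5 is \emph{not} that order complex: its vertices are the nontrivial $H$-invariant subspaces $W$ with $\stab_G(W)\neq H$, and its faces are the subsets $E$ with $\bigcap_{W\in E}\stab_G(W)\neq H$ (the paper's Remark following Definition~\ref{Def_RedSubIdeal} stresses this distinction). In particular the Solomon--Tits description of the building of $\GL(n,q)$ does not apply to $\Delta_1$ directly, so the polynomial bound you claim for $\tilde{\chi}(\Delta_1)$ would need a separate argument. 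Even granting that, the $\mathcal{C}_9$ obstruction remains, and the conjecture stays open.
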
 

Indeed, although the problem is still open in its general setting, it was reduced by Lucchini in \cite{lucchini10} to the study of similar growth conditions for finite almost-simple groups.\\

The following Theorem is the main result of the present paper:
\begin{thm4.5}\label{thm4.5}
Consider a vector space $V$  of finite dimension over $\F_q\,$. 
Let $G$ be an irreducible subgroup of $\,\mathrm{GL}(V)$ and $H\leq G$. 
Then 
\begin{equation}
-\mu_{\widehat{\mathcal{I}}(G,H)}(H,G)=\sum_{E\in\Psi'(G,H)}(-1)^{|E|}\,=\sum_{X\in\Psi(G,H)}(-1)^{|X|}\,=-\tilde{\chi}(\Delta _1 )=-\tilde{\chi}(\Delta _2).
\end{equation}
\end{thm4.5}
${\widehat{\mathcal{I}}(G,H)}$,  ${\Psi'(G,H)}$, ${\Psi(G,H)}$, $\Delta _i$, and $\tilde{\chi}(\Delta _i)$ (for $i=1,\,2$) are defined in the following Section 3, also for irreducible subgroups $H$. This will allow us to avoid the restriction to only reducible subgroups $H$ of $G$ in the statement of the Theorem.

In the final Section, we will use Theorem \ref{RevShar3.27} to deal with $\mu (H,G)$ in some particular case.
In \cite{DG2022},  Theorem \ref{RevShar3.27} is used to attack Conjecture \ref{MannConj} for some class of subgroups $H$ of linear and projective groups.\\

We thank Andrea Lucchini and Johannes Siemons for many useful discussions.

\section{Preliminaries}
In this paper all the groups and sets are finite.

For  main results about posets and lattices, we refer to \cite{stanley86}. Here, we just recall some basic fact, useful  for  reading  the paper.
 \begin{defi}
Let $\mathcal{P}$ be a finite poset.
The {\bf M\"obius function} associated with  $\mathcal{P}$ is the map $\,\mu_{\mathcal{P}}:\mathcal{P}\times \mathcal{P}\rightarrow\Z$ satisfying 
$$\mu_{\mathcal{P}}(x,y)=0\quad  \text{ unless }\;x\leq y\,,$$
and defined recursively for $x\leq y\,$ by 
\begin{equation}\label{EqDef_MobiusFun1}
\mu_{\mathcal{P}}(x,x)=1\;  \quad\text{ and }\quad\; \sum\limits_{x\leq t\leq y}{\mu_{\mathcal{P}}(x,t)}=0 \;\text{ if }\, x<y\,.
\end{equation}

\end{defi}

\begin{Notation} If $\mathcal{P}$ is the subgroup lattice $\mathcal{L}(G)$ of $G$, we will write $\mu (H,K)$ instead of $\mu _{\mathcal{L}(G)}(H,K)$.
\end{Notation}

\begin{defi} An (abstract) {\bf simplicial complex} $\Delta$ on a vertex set
$T$ is a collection $\Delta$  of subsets of $T$ satisfying the two following conditions:
\begin{itemize}
\item if $t\in T$,  then $\{t\} \in \Delta$;
\item if $F\in \Delta$  and $G\subseteq F$, then $ G\in \Delta $.
\end{itemize}
\end{defi}
An element $F\in \Delta $ is called a face of  $\Delta$, and the dimension
of $F$ is defined to be $|F|-1$. In particular, the empty set $\emptyset $ is a face of $\Delta$ 
(provided $\Delta \neq \emptyset$)
 of dimension $-1$. 

\begin{defi}
The {\bf Euler characteristic} $\chi(\Delta)$ of the simplicial complex $\Delta$ is so defined:
$$\chi(\Delta) :=\, \sum _{i\geq0}(-1)^i\cF _i\,=\,\cF_0 - \cF_1 + \cF_2 - \cF_3 + \dots$$
where  $\cF_i$ denotes  the number of the $i$-faces, i.e., of the faces of dimension $i$,  in $\Delta$. 
\end{defi}
We recall here the definition of an order ideal:
 
\begin{defi}Let $(\mathcal{P},\leq)$ be a poset. An \textbf{order ideal} of $\mathcal{P}$ 
is a subset $I\subseteq \mathcal{P}$ such that 
\begin{align}\label{OrderIdealCondition}
\forall\,x\in I,\; \forall\,t\in \mathcal{P}\qquad t\leq x \,\Rightarrow\, t\in I\,.
\end{align}
In particular, if $A$ is a subset of $\mathcal{P}$, then the set
$$\mathcal{P}_{\leq A} := \{s\in \mathcal{P} \mid s\leq a \text{ for some } a\in A\}\subseteq \mathcal{P}$$
is the order ideal of $\mathcal{P}$ generated by $A$.
\end{defi}

Observe that  a simplicial complex on a set of vertices $T$  is just an order ideal of the boolean algebra $B_T$  that contains all one element subsets of $T$. 
\begin{defi} Let $\Delta$ be a simplicial complex, and  $\chi(\Delta)$ be the Euler characteristic of $\Delta$.  The \textbf{reduced Euler characteristic} $\tilde{\chi}(\Delta)$ of $\Delta$  is defined by $\tilde{\chi}(\emptyset)=0$, and $\tilde{\chi}(\Delta)=\chi(\Delta)-1$ if $\Delta\ne\emptyset$. See also Equation (3.22) in \cite{stanley86}. 
\end{defi}

It is possible to build a simplicial complex from a poset
 $\cP$ in the following way:\\
 
The \textbf{order complex} $\Delta(\cP)$ of $\cP$ is defined as the simplicial complex whose vertices are the elements of $\cP$ and whose $k$-dimensional faces are the chains $a_0\prec a_1\prec\cdots\prec a_k$ of length $k$ of distinct elements $a_0,\ldots,a_k\in\cP$.

Now, denote by $\widehat{\cP}$ the finite poset obtained from $\cP$ by adjoining a least element $\hat{0}$ and a greatest element $\hat{1}$. The M\"obius function $\mu_{\widehat{\cP}}(\hat{0},\hat{1})$ is related to $\tilde{\chi}(\Delta(\cP))$ by a well-known result  by Hall in \cite{hall36} about the computation of $\mu_{\widehat{\cP}}(\hat{0},\hat{1})$ by means of the chains of even and odd length between $\hat{0}$ and $\hat{1}$.

\begin{prop}[\rm see \cite{stanley86}, Proposition 3.8.6]\label{prop:reduced}
Let $\cP$ be a finite poset. Then
$$ \mu_{\widehat{\cP}}(\hat{0},\hat{1})=\tilde{\chi}(\Delta(\cP)). $$
\end{prop}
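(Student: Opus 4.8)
The plan is to deduce the identity from Philip Hall's chain formula for the M\"obius function, which I would first establish via the incidence algebra of $\widehat{\cP}$, and then translate that alternating sum over chains into the reduced Euler characteristic through a dimension-shifting bijection between chains and faces of the order complex $\Delta(\cP)$.

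First I would work in the incidence algebra of $\widehat{\cP}$, whose elements are functions $f(x,y)$ supported on pairs $x\leq y$, equipped with the convolution product $(f*g)(x,y)=\sum_{x\leq z\leq y}f(x,z)g(z,y)$, identity $\delta$ (with $\delta(x,y)=1$ iff $x=y$), and zeta function $\zeta$ (with $\zeta(x,y)=1$ whenever $x\leq y$). By definition $\mu$ is the convolution inverse of $\zeta$. Writing $\zeta=\delta+\eta$, where $\eta(x,y)=1$ exactly when $x<y$ and $0$ otherwise, I would observe that $\eta$ is nilpotent: since $\widehat{\cP}$ is finite, every strict chain has bounded length, so $\eta^{k}=0$ once $k$ exceeds the length of the longest chain. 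Hence the geometric series
\begin{equation*}
\mu=\zeta^{-1}=(\delta+\eta)^{-1}=\sum_{k\geq 0}(-1)^{k}\eta^{k}
\end{equation*}
is a finite sum and is valid. The combinatorial point is that $\eta^{k}(\hat{0},\hat{1})$ counts exactly the chains $\hat{0}=t_0<t_1<\cdots<t_k=\hat{1}$ of length $k$ in $\widehat{\cP}$, because each convolution factor forces a strict step; this yields Philip Hall's theorem
\begin{equation*}
\mu_{\widehat{\cP}}(\hat{0},\hat{1})=\sum_{k\geq 1}(-1)^{k}c_k,
\end{equation*}
where $c_k$ is the number of length-$k$ chains from $\hat{0}$ to $\hat{1}$ (and $c_0=0$ since $\hat{0}\neq\hat{1}$).

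Next I would set up the bijection with the order complex. A chain $\hat{0}=t_0<t_1<\cdots<t_k=\hat{1}$ of length $k$ in $\widehat{\cP}$ is determined by its interior $t_1<\cdots<t_{k-1}$, a chain of $k-1$ distinct elements of $\cP$, that is, a face of dimension $(k-1)-1=k-2$ of $\Delta(\cP)$; conversely every face arises this way, and the empty face (dimension $-1$) corresponds to the minimal chain $\hat{0}<\hat{1}$ with $k=1$. Thus $c_k=\cF_{k-2}$, with the convention $\cF_{-1}=1$ for the empty face. Substituting and re-indexing by $j=k-2$ gives
\begin{equation*}
\mu_{\widehat{\cP}}(\hat{0},\hat{1})=\sum_{k\geq 1}(-1)^{k}\cF_{k-2}=\sum_{j\geq -1}(-1)^{j}\cF_{j}=-1+\sum_{i\geq 0}(-1)^{i}\cF_i=\chi(\Delta(\cP))-1=\tilde{\chi}(\Delta(\cP)),
\end{equation*}
which is the claim.

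I expect the main obstacle to be purely one of bookkeeping rather than of substance: aligning the dimension shift and the signs so that the ``$-1$'' coming from the minimal chain $\hat{0}<\hat{1}$ (equivalently, from the empty face weighted by $(-1)^{-1}$) is precisely the constant that turns the ordinary Euler characteristic $\chi$ into the reduced one $\tilde{\chi}$. A related point needing care is the degenerate convention when $\cP=\emptyset$: there $\widehat{\cP}=\{\hat{0}<\hat{1}\}$ gives $\mu=-1$, and one must check this agrees with $\tilde{\chi}$ of the order complex $\Delta(\emptyset)=\{\emptyset\}$, whose only face is the empty face, so that $\chi=0$ and $\tilde{\chi}=-1$. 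Finally I would emphasize that the nilpotency of $\eta$, and hence the legitimacy of the geometric-series inversion, rests on the finiteness of the poset, which is assumed throughout.
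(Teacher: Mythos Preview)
Your proof is correct and follows exactly the route the paper itself points to: the paper does not give its own proof but cites Stanley and remarks that the identity comes from Hall's result expressing $\mu_{\widehat{\cP}}(\hat 0,\hat 1)$ as an alternating sum over chains from $\hat 0$ to $\hat 1$, which is precisely what you derive via the incidence-algebra inversion $\mu=\sum_k(-1)^k\eta^k$ and then reinterpret face-by-face in $\Delta(\cP)$. Your bookkeeping of the dimension shift and of the degenerate case $\cP=\emptyset$ is also consistent with the paper's conventions.
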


\section
{The ideal $\mathcal{I}(G,H)$ and the complexes  $\Delta _i$ in Theorem 4.5} \label{section_DefIdealReducible}
In this section, and in the rest of the paper, $G$ is an irreducible subgroup of 
the general linear group $\GL(n,q)$ over the vector space $V=\mathbb{F}_q^n$ of finite dimension $n$ over the finite field $\F_q$ with $q$ elements. We consider the natural action of $G$ on the set of subspaces of $V$.\\
  
We define the order ideal $\mathcal{I}(G,H)$ and the simplicial complexes $\Delta _i$ (for $i=1,2$) that we consider in Theorem \ref{RevShar3.27}. In Remark 2 below, we will explicitly observe that the two complexes $\Delta _i$  are  not the order simplicial complex rising from $\mathcal{I}(G,H)$.\\

Given a subgroup  $G$  of $\GL (n,q)$ and a subgroup 
$H$
 of $G$, put  
$$\mathcal{L}(G)_{\geq H}:=\{K\leq G\,\mid\,H\leq K\}$$
and \begin{align*}
\mathcal{C}(G,H)&:=\{\,\mathrm{stab}_G(W)\mid 0< W< V\,,\,H\subseteq\mathrm{stab}_G(W) \,\}.
\end{align*}

\begin{defi}\label{Def_RedSubIdeal} 
	The \textbf{reducible subgroup ideal} in $\mathcal{L}(G)_{\geq H}$ is the order ideal of $\mathcal{L}(G)_{\geq H}$ generated by $\mathcal{C}(G,H)\,$. Namely,
	\begin{align*}
	\mathcal{I}(G,H)&=\{ K\leq G\mid H\leq K\leq M \text{ for some } M\in \mathcal{C}(G,H) \}\,.
	\end{align*}
\end{defi}

\begin{oss}
	If $H$ is reducible, that is, $H$ fixes some  non-trivial subspace $W$ of $V$, then $H\in\mathcal{I}(G,H)$. Otherwise, if $H$ is irreducible,  $H\notin\mathcal{I}(G,H)$ and $\mathcal{I}(G,H)$ is empty . The subspaces fixed by $H$ are said to be $H$-invariant.
\end{oss}

\begin{defi}
	If $H$ is reducible, we set
	\begin{align*}
	\widehat{\mathcal{I}}(G,H)&:= \mathcal{I}(G,H)\cup\{G\}\,
	\end{align*} 
	by adjoining the maximum $G$ to $\mathcal{I}(G,H)$, which has minimum $H$. Otherwise, if $H$ is irreducible, we set $\widehat{\mathcal{I}}(G,H):=\{H,G\}$ by adjoining the minimum $H$ and the maximum $G$ to the empty poset $\emptyset$.
\end{defi}
\begin{oss} Here we just note that the poset $\mathcal{I}(G,H)$ has already a minimum if $H$ is reducible, so that  $\mu_{\widehat{\mathcal{I}}(G,H)}(H,G)$ is not, in general, the reduced Euler characteristic of the order complex $\Delta({\mathcal{I}}(G,H))$ of $\mathcal{I}(G,H)$.
\end{oss}

To define the simplicial complexes $\Delta _i$ of Theorem 4.5, 
we begin with fixing some more notation. 
We denote by 
$\mathcal{S}(V,H)$ the lattice of $H$-invariant subspaces of $V$
and define
$$\mathcal{S}(V,H)^*:=\mathcal{S}(V,H)\setminus\{0,V\}\,.$$
Moreover, given an irreducible group $G\leq \GL (V)$, and $H\leq G$, we will consider the following three sets:
	\begin{itemize} 
		\item[(a)] $\Psi(G,H):=\{X\subseteq\mathcal{C}(G,H)\,\mid\,\bigcap_{M\in X}M\neq H\,\}\,$;
		\item[(b)] $\Psi(G,H)^\complement:=\{Y\subseteq\mathcal{C}(G,H)\,\mid\, \bigcap_{M\in Y}M= H\,\}\,$;
		\item[(c)]		
		$\Psi'(G,H):=\{E\subseteq\mathcal{S}(V,H)^*\,\mid\,\bigcap_{W\in E}\mathrm{stab}_G(W)\neq H\}\,.$ 
	\end{itemize}
Observe that $\emptyset\in\Psi(G,H)$ and $\emptyset\in\Psi'(G,H)$, but $\emptyset\notin\Psi(G,H)^\complement\,$. 

\begin{oss}
If $H$ is an irreducible subgroup of $G$, then $\mathcal{S}(V,H)^*=\emptyset $ and $\Psi'(G,H)=\{\emptyset\}=\Psi (G,H)$. 
Since for irreducible $H$ we have that 
$\widehat{\mathcal{I}}(G,H)=\{H,G\}$, then Theorem \ref{RevShar3.27} is trivially verified in this case.
\end{oss}

\begin{defi}
The simplicial complexes  $\Delta _i$ of Theorem 4.5 are so defined: 
\begin{enumerate}[]
	\item $\Delta _1$:
	\begin{itemize}
		\item The set of vertices $T_1$ is given by the subspaces $W\in\mathcal{S}(V,H)^*$ for which $H\neq\stab_G(W)$;
		\item the set of faces of $\Delta _1 $ is given by $\Psi'(G,H)$.
	\end{itemize} \medskip
	\item $\Delta _2$:
	\begin{itemize}
		\item The set of vertices $T_2$ is given by the subgroups $M\in\mathcal{C}(G,H)$ such that $H\neq M$;
		\item the set of faces of $\Delta _2 $ is given by $\Psi(G,H)$.
	\end{itemize}	
\end{enumerate}
\end{defi}

We explicitly observe what happens in the special case when, for some proper non-trivial subspace $W$ of $V$, the subgroup $H=\stab_G(W)$ is maximal with respect to the property of being a stabilizer of a proper non-trivial subspace of $V$ in $G$. 
In this case, we note that, by definition, $T_1=T_2=\emptyset$ and the set of faces of $\Delta_1$ and $\Delta_2$ is $\{\emptyset\}$. Then Theorem 4.5 is trivially verified.

\section{Computing $\mu_{\widehat{\mathcal{I}}(G,H)}(H,G)$}
In order to prove Theorem \ref{RevShar3.27}, we need the   following Proposition \ref{RevSharApp2.17}  that gives a link between  $\Psi(G,H)$ and $\Psi'(G,H)$, and also shows that the reduced Euler characteristics of the complexes $\Delta _i$ coincide. The proof of Proposition \ref{RevSharApp2.17} follows at once from Lemma \ref{RevShar2.17}.

\begin{lem}\label{RevShar2.17}
Let $T$ be a subgroup of a finite group $L$  acting on a finite set $X$   
and let $X'\subseteq X$ be a subset such that $T\leq L_x\,$ for all $x\in X'$  (as usual, $L_x$ denotes the stabilizer of $x$ in  $L$). Set 
	\begin{itemize} 
		\item $\mathcal{L}:=\{L_x\,\mid\,x\in X'\}\,$; 
		\item $\mathcal{R}:=\{E\subseteq{\mathcal{L}}\,\mid\, \bigcap_{K\in E}K\neq T\}\,$; 
		\item $\mathcal{S}:=\{Q\subseteq X'\,\mid\, \bigcap_{x\in Q}L_x\neq T\}\,$. 
	\end{itemize} 
Then $$\sum_{E\in\mathcal{R}}(-1)^{|E|}=\sum_{Q\in\mathcal{S}}(-1)^{|Q|}\;.$$ 
\end{lem}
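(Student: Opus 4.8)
The plan is to set up a bijection (or rather an incidence-preserving correspondence) between the sets $\mathcal{R}$ and $\mathcal{S}$, or, more robustly, to compute both alternating sums by a common inclusion–exclusion argument over the same indexing poset. The cleanest route I would take is to pass through the closure operator on subsets of $X'$ induced by the map $Q\mapsto \bigcap_{x\in Q}L_x$. Concretely, for $Q\subseteq X'$ put $\overline{Q}:=\{\,y\in X'\mid \bigcap_{x\in Q}L_x\leq L_y\,\}$; this is a closure operator, and $\bigcap_{x\in Q}L_x=\bigcap_{x\in\overline{Q}}L_x$, so membership of $Q$ in $\mathcal{S}$ depends only on $\overline{Q}$. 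The first step is to record these elementary facts: $Q\subseteq\overline{Q}$, $\overline{\overline{Q}}=\overline{Q}$, and $Q_1\subseteq Q_2\Rightarrow \overline{Q_1}\subseteq\overline{Q_2}$.

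Next I would rewrite the sum over $\mathcal{S}$ by grouping subsets according to their closure: $\sum_{Q\in\mathcal{S}}(-1)^{|Q|}=\sum_{C}\;\sum_{Q:\overline{Q}=C}(-1)^{|Q|}$, where $C$ ranges over closed subsets with $\bigcap_{x\in C}L_x\neq T$. For a fixed closed set $C$, the subsets $Q$ with $\overline{Q}=C$ are exactly those $Q\subseteq C$ whose closure is all of $C$; by a standard Möbius/inclusion–exclusion computation on the boolean lattice below $C$, the inner sum $\sum_{Q:\overline{Q}=C}(-1)^{|Q|}$ equals $0$ unless $C$ itself has no "removable" points, i.e. unless $C$ is closed and every point of $C$ is essential. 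The analogous grouping works for $\mathcal{R}$: define a closure on subsets $E\subseteq\mathcal{L}$ by $\overline{E}:=\{K\in\mathcal{L}\mid \bigcap_{K'\in E}K'\leq K\}$, and group the sum over $\mathcal{R}$ the same way. The key observation linking the two sides is that the map $x\mapsto L_x$ sends the closure operator on $X'$ to the closure operator on $\mathcal{L}$ compatibly: a subset of $X'$ is closed (resp. essential at each point) if and only if its image, suitably interpreted, is; more precisely the poset of closed subsets of $X'$ under this operator is isomorphic to the poset of closed subsets of $\mathcal{L}$, because both are isomorphic to the poset $\{\bigcap_{x\in Q}L_x : Q\subseteq X'\}$ of "intersection subgroups" ordered by reverse inclusion. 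Under this identification the two inner sums agree term by term, so the total sums agree.

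An alternative, possibly shorter, formulation avoids closures: observe that both sides equal $-\mu_{\widehat{\cP}}(\hat 0,\hat 1)$–type quantities, or directly that $\sum_{E\in\mathcal{R}}(-1)^{|E|}=\sum_{E\subseteq\mathcal{L}}(-1)^{|E|}-\sum_{E:\bigcap E=T}(-1)^{|E|}=-\sum_{E:\bigcap E=T}(-1)^{|E|}$ (using $\sum_{E\subseteq\mathcal{L}}(-1)^{|E|}=0$ since $\mathcal{L}\neq\emptyset$ — note $X'\neq\emptyset$ must be handled, and if $X'=\emptyset$ both sums are the single empty term and equal), and similarly $\sum_{Q\in\mathcal{S}}(-1)^{|Q|}=-\sum_{Q\subseteq X':\,\bigcap_{x\in Q}L_x=T}(-1)^{|Q|}$. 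So it suffices to prove $\sum_{E\subseteq\mathcal{L}:\,\bigcap E=T}(-1)^{|E|}=\sum_{Q\subseteq X':\,\bigcap_{x\in Q}L_x=T}(-1)^{|Q|}$, and here one fibers the right-hand sum over the left-hand one via $Q\mapsto \{L_x:x\in Q\}$: the fiber over a fixed $E=\{K_1,\dots,K_r\}$ with $\bigcap E=T$ consists of all $Q$ that hit each set $\{x\in X':L_x=K_i\}$ at least once and hit no other fiber of $x\mapsto L_x$, and $\sum(-1)^{|Q|}$ over such $Q$ is $\prod_i\big(\sum_{\emptyset\neq S\subseteq F_i}(-1)^{|S|}\big)=\prod_i(-1)=(-1)^r=(-1)^{|E|}$, where $F_i=\{x:L_x=K_i\}$. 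Summing over $E$ gives the claim.

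The main obstacle I anticipate is the bookkeeping at the two "boundary" configurations: when $X'=\emptyset$ (so $\mathcal{L}=\emptyset$ and both alternating sums reduce to the single empty set, which lies in $\mathcal{R}$ and $\mathcal{S}$ iff the empty intersection — conventionally $L$ or the ambient group — differs from $T$), and the careful justification that $\sum_{S\subseteq\mathcal{L}}(-1)^{|S|}=0$ requires $\mathcal{L}\neq\emptyset$. Everything else is a routine application of the identity $\sum_{\emptyset\neq S\subseteq F}(-1)^{|S|}=-1$ for finite nonempty $F$ together with the multiplicativity of signs over the partition of $X'$ into the fibers $F_i$ of $x\mapsto L_x$. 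I would therefore dispose of the degenerate cases first, then present the fibering argument of the last paragraph as the main computation.
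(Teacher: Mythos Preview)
Your alternative approach---fibering the sum over $Q$ via the map $Q\mapsto\{L_x:x\in Q\}$ and using the product identity $\prod_i\bigl(\sum_{\emptyset\neq S\subseteq F_i}(-1)^{|S|}\bigr)=(-1)^{|E|}$---is exactly the paper's argument. The only difference is that you first pass to the complements (subsets $E,Q$ with intersection equal to $T$) via the identity $\sum_{S\subseteq A}(-1)^{|S|}=0$, whereas the paper fibers $\mathcal{S}$ over $\mathcal{R}$ directly: since $\bigcap_{x\in Q}L_x=\bigcap_{K\in\mathcal{L}_Q}K$ where $\mathcal{L}_Q:=\{L_x:x\in Q\}$, the map $Q\mapsto\mathcal{L}_Q$ lands in $\mathcal{R}$ precisely when $Q\in\mathcal{S}$, so $\mathcal{S}=\bigsqcup_{E\in\mathcal{R}}\mathcal{S}_E$ with $\mathcal{S}_E=\{Q\in\mathcal{S}:\mathcal{L}_Q=E\}$, and your same fiber computation yields $\sum_{Q\in\mathcal{S}_E}(-1)^{|Q|}=(-1)^{|E|}$ for each $E\in\mathcal{R}$. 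Doing it this way eliminates the boundary case $\mathcal{L}=\emptyset$ you worried about: the empty set sits on both sides with the same sign, and no appeal to $\sum_{S\subseteq\mathcal{L}}(-1)^{|S|}=0$ is needed. Your closure-operator route would also work in principle but is, as you yourself suspect, considerably longer than necessary.
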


\begin{proof}
	If $Q\in \mathcal{S}$ and  $E\in\mathcal{R}$, set  
	$$\,\mathcal{L}_Q=\{L_x\mid x\in Q\}\;\text{ and }\;
	\mathcal{S}_E=\{Q\in\mathcal{S}\, \mid\, E=\,\mathcal{L}_Q\}.$$ It is immediate to realize that
	$$\mathcal{S}=\bigsqcup_{E\in\mathcal{R}}\mathcal{S}_E$$
	is the disjoint union of all the $\mathcal{S}_E$. 
	So, it suffices to show that for each $E\in\mathcal{R}$ the following identity is verified: 
	\begin{equation}\label{Eq_Lemma1}
	(-1)^{|E|}=\sum_{Q\in\mathcal{S}_E}(-1)^{|Q|}\,.
	\end{equation} 
	For $E=\emptyset$, the identity \eqref{Eq_Lemma1} is trivially true.
	Now, fix a non-empty $E\in\mathcal{R}$, and for each $K\in E$ define
	$$X'_K=\{x\in X'\,\mid\, L_x=K\}\subseteq X'\,.$$
	Let $Q\in\mathcal{S}_E$ and observe that $Q$ can be represented as the following disjoint union:
	$$Q=\bigsqcup_{K\in E}Q_K\,,$$
	where $\,Q_K=\{x\in Q\,\mid\, L_x=K\}\subseteq X'_K\,$ and $Q_K\neq\emptyset$ (this property characterizes the elements $Q$ of $\mathcal{S}_E$).  
	Since  $\sum_{\emptyset\neq Q_K\subseteq X'_K}(-1)^{|Q_K|}{=-1}$ (see Remark \ref{BinomCoeffSets} below),	
	we get 
	\begin{align*}\sum_{Q\in\mathcal{S}_E}(-1)^{|Q|}=\sum_{Q\in\mathcal{S}_E}(-1)^{\sum_{K\in E} |Q_K|}=\prod_{K\in E}\left(\sum_{\emptyset\neq Q_K\subseteq X'_K}(-1)^{|Q_K|}\right)=\prod_{K\in E}(-1)=(-1)^{|E|}\,\end{align*}
	and we obtain the identity \eqref{Eq_Lemma1}.
\end{proof}

\begin{prop}\label{RevSharApp2.17} 
	Let $V$ be a vector space of finite dimension over $\F_q\,$ and consider a subgroup 	 $H\leq G\leq\mathrm{GL}(V)$. 
	We have:
	\begin{equation}\label{eq_RevSharApp2.17}
	\sum_{E\in\Psi'(G,H)}(-1)^{|E|}=\sum_{{X}\in\Psi(G,H)}(-1)^{|{X}|}\,.
	\end{equation}
	Equivalently, $\tilde{\chi}(\Delta _1 )=\tilde{\chi}(\Delta _2)$.
\end{prop}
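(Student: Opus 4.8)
The plan is to obtain Proposition~\ref{RevSharApp2.17} as an immediate specialization of Lemma~\ref{RevShar2.17}. First I would apply that lemma with $L=G$, $T=H$, letting $G$ act in the natural way on the set $X$ of all subspaces of $V$, and choosing $X'=\mathcal{S}(V,H)^*$, the set of proper non-trivial $H$-invariant subspaces. The hypothesis $T\leq L_x$ for all $x\in X'$ holds because a subspace $W$ is $H$-invariant exactly when $H\leq\stab_G(W)$; hence $H\leq\stab_G(W)$ for every $W\in X'$, as required.

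With these choices I would then read off, straight from the definitions, that the three sets built in Lemma~\ref{RevShar2.17} are precisely the objects appearing in the Proposition: $\mathcal{L}=\{\stab_G(W)\mid W\in\mathcal{S}(V,H)^*\}=\mathcal{C}(G,H)$ (an equality of sets of subgroups, even though distinct subspaces may share the same stabilizer); $\mathcal{R}=\{E\subseteq\mathcal{C}(G,H)\mid\bigcap_{M\in E}M\neq H\}=\Psi(G,H)$; and $\mathcal{S}=\{Q\subseteq\mathcal{S}(V,H)^*\mid\bigcap_{W\in Q}\stab_G(W)\neq H\}=\Psi'(G,H)$, where the empty intersection is understood, as is standard in a subgroup lattice, as the top element $G$ --- which is exactly the convention under which $\emptyset\in\Psi(G,H)\cap\Psi'(G,H)$. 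Lemma~\ref{RevShar2.17} then gives at once $\sum_{E\in\Psi'(G,H)}(-1)^{|E|}=\sum_{X\in\Psi(G,H)}(-1)^{|X|}$, which is \eqref{eq_RevSharApp2.17}.

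It remains to translate this identity of alternating sums into $\tilde{\chi}(\Delta_1)=\tilde{\chi}(\Delta_2)$, which I would do by unwinding the Euler-characteristic definitions. Since the faces of $\Delta_1$ are the elements of $\Psi'(G,H)$ and a face $E$ has dimension $|E|-1$, the number $\cF_i$ of $i$-faces of $\Delta_1$ is the number of $E\in\Psi'(G,H)$ with $|E|=i+1$, so $\chi(\Delta_1)=\sum_{i\geq0}(-1)^i\cF_i=-\sum_{\emptyset\neq E\in\Psi'(G,H)}(-1)^{|E|}$; as $\Delta_1$ is non-empty (it contains the face $\emptyset$), $\tilde{\chi}(\Delta_1)=\chi(\Delta_1)-1=-\sum_{E\in\Psi'(G,H)}(-1)^{|E|}$, the $-1$ absorbing the $E=\emptyset$ term. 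The identical computation yields $\tilde{\chi}(\Delta_2)=-\sum_{X\in\Psi(G,H)}(-1)^{|X|}$, so the two reduced Euler characteristics agree precisely because the two alternating sums do.

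I do not expect a genuine obstacle: the only delicate point is the careful matching of the empty-face/empty-intersection conventions and the verification that replacing each subspace by its stabilizer leaves the alternating sum unchanged --- which is exactly what Lemma~\ref{RevShar2.17} is designed to supply. The fully degenerate configurations ($H$ irreducible, or $H=\stab_G(W)$ maximal among stabilizers of proper non-trivial subspaces) have already been disposed of in Section~3, where both sides were directly seen to coincide, so they need no further treatment here.
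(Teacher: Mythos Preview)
Your proposal is correct and follows exactly the paper's approach: apply Lemma~\ref{RevShar2.17} with $L=G$, $T=H$, $X'=\mathcal{S}(V,H)^*$, so that $\mathcal{L}=\mathcal{C}(G,H)$, $\mathcal{R}=\Psi(G,H)$, $\mathcal{S}=\Psi'(G,H)$, and the identity drops out. Your write-up is in fact more detailed than the paper's, which leaves the Euler-characteristic translation implicit; your unwinding of $\tilde{\chi}(\Delta_i)$ in terms of the alternating sums is a welcome clarification.
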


\begin{proof}
Consider the natural action of $G$ on the set of subspaces of $V$. By Lemma \ref{RevShar2.17} below, the equality follows at once from  the definitions of $\mathcal{S}(V,H)^*$, $\mathcal{C}(G,H)$, $\Psi(G,H)$ and $\Psi'(G,H)\,$. \\
With previous notation, we take $T=H$, $L=G$, $\mathcal{R}=\Psi (G,H)$, $\mathcal{S}=\Psi'(G,H)$.\\
\end{proof}

To prove Theorem \ref{RevShar3.27}, we need Proposition \ref{RevSharApp2.6} which is achieved through Theorem \ref{Crosscut} (Crosscut Theorem, see \cite[Corollary 3.9.4]{stanley86}) and Remark \ref{BinomCoeffSets}.

\begin{oss}\label{BinomCoeffSets}
For every finite set $A$ of cardinality $n>0$ we have  
	$$\sum_{S\subseteq A}(-1)^{|S|}=\sum_{k=0}^n\binom{n}{k}(-1)^k=(1-1)^n=0\,.$$
\end{oss}
\begin{thm}[Crosscut Theorem]\label{Crosscut}  Let $L$ be a finite lattice with minimum
$\hat 0$
and maximum $\hat 1$, so that $\hat 0\neq \hat 1$. Let $M$ be the set of all coatoms in
$L$. Let $X\subseteq L$ be a subset such that $M \subseteq X$ and $\hat1 \notin X$.\\ Given $\mathcal{Y} := \{Y\subseteq  X\,\mid\,  Y\neq {\emptyset}\, \text{ and }\,
\bigwedge_{x\in Y}\,x=\hat 0\}$, the following equality holds:
$$\mu _L(\hat0, \hat 1) =\sum_{Y\in\mathcal{Y}}(-1)^{\mid Y\mid}.$$
\end{thm}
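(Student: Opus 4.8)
The plan is to prove the identity directly from the defining recurrence \eqref{EqDef_MobiusFun1} of the M\"obius function together with the elementary alternating-sum identity recorded in Remark \ref{BinomCoeffSets}, via a single application of (dual) M\"obius inversion on $L$. First I would, for each $z\in L$, introduce the two quantities
$$a(z):=\sum_{\substack{\emptyset\neq Y\subseteq X\\ \bigwedge_{x\in Y}x=z}}(-1)^{|Y|},\qquad b(z):=\sum_{\substack{\emptyset\neq Y\subseteq X\\ \bigwedge_{x\in Y}x\geq z}}(-1)^{|Y|},$$
so that the target sum $\sum_{Y\in\mathcal{Y}}(-1)^{|Y|}$ is exactly $a(\hat 0)$. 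Grouping the subsets $Y$ by the value of their meet gives the relation $b(z)=\sum_{w\geq z}a(w)$, and the entire content of the argument is then concentrated in an explicit evaluation of $b(z)$.

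Second, I would evaluate $b(z)$. Writing $F(z):=\{x\in X\mid x\geq z\}$, a subset $Y\subseteq X$ satisfies $\bigwedge_{x\in Y}x\geq z$ precisely when $Y\subseteq F(z)$, so $b(z)=\sum_{\emptyset\neq Y\subseteq F(z)}(-1)^{|Y|}$. By Remark \ref{BinomCoeffSets} this equals $-1$ when $F(z)\neq\emptyset$ and $0$ when $F(z)=\emptyset$. The key observation, and the only place the crosscut hypotheses $M\subseteq X$ and $\hat 1\notin X$ are used, is that $F(z)=\emptyset$ holds if and only if $z=\hat 1$: indeed $F(\hat 1)=\emptyset$ because $\hat 1\notin X$, whereas for any $z\neq\hat 1$ the finiteness of $L$ furnishes a coatom $c$ with $z\leq c<\hat 1$, and $c\in M\subseteq X$ witnesses $F(z)\neq\emptyset$. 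Hence $b(\hat 1)=0$ and $b(w)=-1$ for every $w\neq\hat 1$.

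Finally, I would apply M\"obius inversion to the relation $b(z)=\sum_{w\geq z}a(w)$ to recover $a(\hat 0)=\sum_{w\in L}\mu_L(\hat 0,w)\,b(w)$, substitute the values of $b(w)$ just computed, and simplify:
$$a(\hat 0)=-\sum_{w\neq\hat 1}\mu_L(\hat 0,w)=\mu_L(\hat 0,\hat 1)-\sum_{w\in L}\mu_L(\hat 0,w)=\mu_L(\hat 0,\hat 1),$$
where the last step uses $\sum_{\hat 0\leq w\leq\hat 1}\mu_L(\hat 0,w)=0$, which is immediate from \eqref{EqDef_MobiusFun1} since $\hat 0\neq\hat 1$. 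I do not anticipate a genuine obstacle: the argument is a clean inclusion--exclusion followed by M\"obius inversion. The one point requiring care is the boundary bookkeeping around the empty set, namely ensuring that $Y=\emptyset$ is excluded in the definitions of $a$ and $b$ while the empty subset of $F(z)$ is correctly included when invoking Remark \ref{BinomCoeffSets}, together with the elementary ``every non-top element lies below a coatom'' fact, which is exactly where the finiteness of $L$ enters.
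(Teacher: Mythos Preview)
Your argument is correct: the M\"obius-inversion computation via $a(z)$ and $b(z)$ is the standard proof, and the bookkeeping around $Y=\emptyset$ and the coatom step are handled cleanly. Note, however, that the paper does not prove Theorem~\ref{Crosscut} at all; it merely states the Crosscut Theorem and cites \cite[Corollary~3.9.4]{stanley86}, so there is no ``paper's own proof'' to compare against --- your write-up simply supplies what the paper takes as a black box, and in fact your argument is essentially the one found in Stanley.
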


\begin{prop}\label{RevSharApp2.6}
	Let $V$ be a vector space of finite dimension over $\F_q\,$. 
	Let $H\leq G\leq\mathrm{GL}(V)$. 
	Then we have that
	\begin{equation}\label{eq_RevSharApp2.6}
	\mu_{\widehat{\mathcal{I}}(G,H)}(H,G)=\sum_{Y\in\Psi(G,H)^\complement}(-1)^{|Y|}\,.
	\end{equation}	
\end{prop}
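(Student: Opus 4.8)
The plan is to apply the Crosscut Theorem (Theorem~\ref{Crosscut}) to the poset $L:=\widehat{\mathcal{I}}(G,H)$, taking $\hat 0=H$ and $\hat 1=G$. Assume $H$ is reducible and $H<G$, so that these two elements are distinct; the remaining degenerate cases are immediate from the definitions of Section~\ref{section_DefIdealReducible}. The subsets $Y\subseteq\mathcal{C}(G,H)$ with $\bigcap_{M\in Y}M=H$ are exactly those that the Crosscut Theorem will single out, so the content of the Proposition is essentially that this theorem applies in our situation and that its hypotheses match our definitions.

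First I would check that $\widehat{\mathcal{I}}(G,H)$ really is a finite lattice — this is the one point where care is needed, since an order ideal of a lattice need not be a lattice. It has a maximum $G$ and a minimum $H$ by construction, so it is enough to produce meets. If $K_1,K_2\in\mathcal{I}(G,H)$, say $K_i\leq M_i$ with $M_i\in\mathcal{C}(G,H)$, then $H\leq K_1\cap K_2\leq M_1$, so $K_1\cap K_2$ again lies in $\mathcal{I}(G,H)$; meets involving the top element $G$ are trivial. Hence any two elements of $\widehat{\mathcal{I}}(G,H)$ have a meet, namely their set-theoretic intersection, and a finite poset with a greatest element in which all pairwise meets exist is a lattice. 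Two consequences matter: the meet operation $\wedge$ of $L$ is just intersection of subgroups, and the coatoms of $L$ are precisely the maximal members of $\mathcal{C}(G,H)$ — indeed, if $K$ is covered by $G$ it is a maximal element of $\mathcal{I}(G,H)$, and since $K\leq M$ for some $M\in\mathcal{C}(G,H)\subseteq\mathcal{I}(G,H)$, maximality forces $K=M$. In particular every coatom of $L$ belongs to $\mathcal{C}(G,H)$, whereas $G\notin\mathcal{C}(G,H)$ because $G$ is irreducible and so fixes no proper non-trivial subspace.

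Next I would invoke the Crosscut Theorem with $X:=\mathcal{C}(G,H)$, which is a subset of $L$ containing all coatoms and not containing $\hat 1=G$. It yields
$$\mu_{\widehat{\mathcal{I}}(G,H)}(H,G)=\sum_{Y\in\mathcal{Y}}(-1)^{|Y|},\qquad \mathcal{Y}=\Bigl\{\,Y\subseteq\mathcal{C}(G,H)\ \Bigm|\ Y\neq\emptyset,\ \textstyle\bigwedge_{M\in Y}M=H\,\Bigr\}.$$
Since $\wedge$ is intersection, $\bigwedge_{M\in Y}M=\bigcap_{M\in Y}M$, so the defining condition of $\mathcal{Y}$ reads $\bigcap_{M\in Y}M=H$; and because the empty intersection equals $G\neq H$, the clause $Y\neq\emptyset$ is automatic. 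Therefore $\mathcal{Y}$ coincides with $\Psi(G,H)^\complement$, and \eqref{eq_RevSharApp2.6} follows at once.

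The main obstacle is the lattice-theoretic bookkeeping of the second step. One must genuinely use the closure of $\mathcal{I}(G,H)$ under intersections lying below a common generator in order to be entitled to apply the Crosscut Theorem, and one must also verify that the abstract meet appearing in the Crosscut formula is the concrete group-theoretic intersection appearing in the definition of $\Psi(G,H)^\complement$ (the joins of $L$, which in general differ from joins in $\mathcal{L}(G)$, never enter the computation and may be ignored). Once these identifications are in place, the rest is a routine translation of the conclusion of Theorem~\ref{Crosscut} into the notation of Section~\ref{section_DefIdealReducible}.
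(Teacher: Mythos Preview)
Your proposal is correct and follows essentially the same route as the paper: verify that $\widehat{\mathcal{I}}(G,H)$ is a finite lattice with meets given by intersection, observe that its coatoms lie in $\mathcal{C}(G,H)$ while $G\notin\mathcal{C}(G,H)$ by irreducibility, and apply the Crosscut Theorem with $X=\mathcal{C}(G,H)$. Your lattice verification via the meet-semilattice-with-top criterion is in fact slightly more careful than the paper's, which asserts the existence of joins directly, but the substance of the argument is the same.
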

\begin{proof} 
We observe that $\widehat{\mathcal{I}}(G,H)\subseteq \mathcal{L}(G)_{\geq H}$,  is a lattice because the join of  two subgroups $K_1,K_2\in\mathcal{I}(G,H)$ is either in $\mathcal{I}(G,H)$ or equal to $G$.  The meet of $K_1,K_2\in\mathcal{I}(G,H)$ is $K_1\cap K_2\,$. Hence, $\widehat{\mathcal{I}}(G,H)$ is a finite lattice, whose set of coatoms is contained in $\mathcal{C}(G,H)$, and its maximum $G\notin\mathcal{C}(G,H)$, because $G$ is assumed to be irreducible.
Since 
$$\Psi(G,H)^\complement=\{Y\subseteq\mathcal{C}(G,H)\,\mid\,Y\neq\emptyset \;\text{ and }\; \bigcap_{M\in Y}M= H\,\}\,,$$ by Theorem \ref{Crosscut} we immediately obtain \eqref{eq_RevSharApp2.6}. 
\end{proof}

Now, observe that the disjoint union $\Psi(G,H)\cup\Psi(G,H)^\complement$ is the power set of $\mathcal{C}(G,H)$, so that by Remark \ref{BinomCoeffSets} we have
\begin{equation}\label{eq_SharAppBinCoeff}
\sum_{X\in\Psi(G,H)}(-1)^{|X|} + \sum_{Y\in\Psi(G,H)^\complement}(-1)^{|Y|} = 0.
\end{equation}

If we put together equations \eqref{eq_RevSharApp2.17}, \eqref{eq_RevSharApp2.6}, and \eqref{eq_SharAppBinCoeff}, we get:

\begin{thm}\label{RevShar3.27}
	Let $V$ be a vector space of finite dimension over $\F_q\,$. 
	Let $H\leq G\leq\mathrm{GL}(V)$. 
	Then 
	\begin{equation}\label{eq_RevShar3.27}
	-\mu_{\widehat{\mathcal{I}}(G,H)}(H,G)=\sum_{E\in\Psi'(G,H)}(-1)^{|E|}\,.
	\end{equation}	
\end{thm}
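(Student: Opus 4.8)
The plan is to assemble Theorem \ref{RevShar3.27} directly from the three ingredients already established in this section: Proposition \ref{RevSharApp2.17}, which identifies $\sum_{E\in\Psi'(G,H)}(-1)^{|E|}$ with $\sum_{X\in\Psi(G,H)}(-1)^{|X|}$; Proposition \ref{RevSharApp2.6}, which expresses $\mu_{\widehat{\mathcal{I}}(G,H)}(H,G)$ as $\sum_{Y\in\Psi(G,H)^\complement}(-1)^{|Y|}$; and the inclusion--exclusion identity \eqref{eq_SharAppBinCoeff} coming from Remark \ref{BinomCoeffSets}, which says that the two sums over $\Psi(G,H)$ and over its complement $\Psi(G,H)^\complement$ cancel because together they range over the full power set of $\mathcal{C}(G,H)$.

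Concretely, I would first invoke \eqref{eq_SharAppBinCoeff} to rewrite $\sum_{X\in\Psi(G,H)}(-1)^{|X|} = -\sum_{Y\in\Psi(G,H)^\complement}(-1)^{|Y|}$. Then I would substitute the right-hand side using Proposition \ref{RevSharApp2.6}, obtaining $\sum_{X\in\Psi(G,H)}(-1)^{|X|} = -\mu_{\widehat{\mathcal{I}}(G,H)}(H,G)$. Finally I would apply Proposition \ref{RevSharApp2.17} to replace $\sum_{X\in\Psi(G,H)}(-1)^{|X|}$ by $\sum_{E\in\Psi'(G,H)}(-1)^{|E|}$, which yields exactly \eqref{eq_RevShar3.27}. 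The chain of equalities is
\begin{equation*}
-\mu_{\widehat{\mathcal{I}}(G,H)}(H,G) = -\sum_{Y\in\Psi(G,H)^\complement}(-1)^{|Y|} = \sum_{X\in\Psi(G,H)}(-1)^{|X|} = \sum_{E\in\Psi'(G,H)}(-1)^{|E|}\,.
\end{equation*}

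Since all three component results are already proved, there is no genuine obstacle here; the theorem is a bookkeeping combination. If anything deserves a word of care, it is checking that the hypotheses of each cited result are met in the generality stated: Proposition \ref{RevSharApp2.6} uses that $G$ is irreducible so that $G\notin\mathcal{C}(G,H)$ and hence $G$ is a legitimate ``$\hat 1\notin X$'' in the Crosscut Theorem, and one should note that the degenerate cases (for instance $H$ irreducible, or $H$ a maximal subspace-stabilizer) have already been observed to make both sides trivially equal in the remarks of Section \ref{section_DefIdealReducible}, so they need no separate treatment. The remaining equalities in the displayed formula of Theorem 4.5 — the two involving $\tilde{\chi}(\Delta_1)$ and $\tilde{\chi}(\Delta_2)$ — then follow immediately from the definitions of $\Delta_1$ and $\Delta_2$, whose face sets are $\Psi'(G,H)$ and $\Psi(G,H)$ respectively, together with the definition of reduced Euler characteristic as the alternating sum of face counts shifted so that the empty face contributes $(-1)^{-1+1}\cdot 1$; that is, $\tilde{\chi}(\Delta_i) = \sum_{F}(-1)^{\dim F} = \sum_{F}(-1)^{|F|-1} = -\sum_{F}(-1)^{|F|}$.
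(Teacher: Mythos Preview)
Your proof is correct and follows exactly the same route as the paper: invoke \eqref{eq_SharAppBinCoeff} to flip the sign between the sums over $\Psi(G,H)$ and $\Psi(G,H)^\complement$, then substitute Proposition \ref{RevSharApp2.6} and Proposition \ref{RevSharApp2.17} to obtain the displayed chain of equalities. Your additional remarks on the degenerate cases and on reading off $\tilde{\chi}(\Delta_i)$ from the face sets are consistent with the paper's treatment in Section \ref{section_DefIdealReducible}.
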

\begin{proof} 
	By \eqref{eq_SharAppBinCoeff}, we have 		 $$\sum_{{X}\in\Psi(G,H)}(-1)^{|{X}|}=-\sum_{{Y}\in\Psi(G,H)^\complement}(-1)^{|{Y}|}\,.$$  
	Then, by Proposition \ref{RevSharApp2.17} and Proposition \ref{RevSharApp2.6}, 
	\begin{align*} 
	\sum_{E\in\Psi'(G,H)}(-1)^{|E|} =\sum_{{X}\in\Psi(G,H)}(-1)^{|X|} & =-\sum_{{Y}\in\Psi(G,H)^\complement}(-1)^{|{Y}|} \\ \\
	& =-\mu_{\widehat{\mathcal{I}}(G,H)}(H,G)\,.
	\end{align*} 
\end{proof}

\section{Final Remark}
Going back to Conjecture \ref{MannConj}, we observe that the knowledge of
$$\sum_{E\in\Psi'(G,H)}(-1)^{|E|}\,$$
coming from \eqref{eq_RevShar3.27} can be exploited to estimate the value $\mu (H,G)$ of the M\"obius function $\mu$ of $G$ for $H\leq G$, at least in some particular case.  Here we just give an example for a particular reducible $H$, taking  $G=\GL(n,q)$.

Following the idea suggested by Shareshian in \cite{shareshian96},  one could write 
\begin{equation}\label{eq_MobShareshian1b}
\mu(H,G)=f_{G,n,q}(H)+\sum_{K\in\mathcal{C}_9}\mu(H,K)\,,
\end{equation}
where $f_{G,n,q}(H)$ depends on the classes $\mathcal{C}_i(G,H)$, for $i=1,\dots,8$, in Aschbacher's classification.
In some lucky case, $H$ is not contained in maximal subgroups belonging to classes $\mathcal(C_i),\, i\neq 1,\, 9$.
This happens, for example, in the following case: \\
Let $V\simeq\F_q^n\,$ be a vector space of finite dimension $n$ over $\F_q\,$
and fix the following basis of $V\,$:
$$\mathcal{E}:=\{w_1,\dots,w_{m},\, v_{m+1},\dots,  v_{n}\},$$ so that 
\begin{align*} V = \langle w_1,\dots,w_{m}\rangle \oplus\langle v_{m+1},\dots, v_n\rangle\,
\end{align*} 
If  $W=\langle w_1,\dots,w_{m}\rangle$,  $H$ is the subgroup of $\mathrm{GL}(n,q)$ acting as $\mathrm{GL}(m,q)$ on $W$ and fixing all the elements $v_{m+1},\dots, v_n$. 

We do not give in this context the details of the proof of the following Theorem, which needs many technical arguments. In \cite{DGthesis}, all the details are given.

\begin{thm}[\cite{DGthesis}]\label{esempietto}
	Let $G=\mathrm{GL}(n,q)$, and let $H\leq G$ be such that 
	$$H=\mathrm{GL}(m,q)\oplus I_{n-m}\,.$$
	Let $q=p\,$ be an odd prime and let the dimension $n$ be prime. 
	If $\,n-m+1\,$ is prime, then $$\mu_{\widehat{\mathcal{I}}(G)}(H,G)=0=f_{G,n,p}(H)\,$$ 
	so that 
	\begin{equation*}
	\mu(H,G)=\sum_{\substack{K\in\mathcal{C}_9(G,H)\,,\, H\subseteq K}}\mu(H,K)\,.
	\end{equation*}  
\end{thm}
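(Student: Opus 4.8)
The proof splits into two essentially independent parts:
\textbf{(I)} $\mu_{\widehat{\mathcal{I}}(G,H)}(H,G)=0$; and
\textbf{(II)} $H$ lies in no maximal subgroup of $G=\mathrm{GL}(n,q)$ belonging to an Aschbacher class $\mathcal{C}_i$ with $i\in\{2,\dots,8\}$. Granting (II), the function $f_{G,n,p}(H)$ of \eqref{eq_MobShareshian1b}, which only involves the geometric classes $\mathcal{C}_1,\dots,\mathcal{C}_8$, reduces to its $\mathcal{C}_1$-contribution $\mu_{\widehat{\mathcal{I}}(G,H)}(H,G)$, so (I) gives $f_{G,n,p}(H)=0$; then \eqref{eq_MobShareshian1b} yields $\mu(H,G)=\sum_{K\in\mathcal{C}_9(G,H),\,H\subseteq K}\mu(H,K)$, as claimed.

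For (I) I would first describe $\mathcal{S}(V,H)$ explicitly. Write $V=W\oplus U$ with $W=\langle w_1,\dots,w_m\rangle$ and $U=\langle v_{m+1},\dots,v_n\rangle$, so $H$ acts as the full $\mathrm{GL}(W)$ on $W$ and trivially on $U$. Since $q=p\geq 3$, $W$ is a non-trivial irreducible $H$-module and $U$ a trivial one, hence $\mathrm{Hom}_H(W,U)=\mathrm{Hom}_H(U,W)=0$; using moreover that the vectors $(h-1)w$ (for $h\in H$, $w\in W$) span $W$, a routine module computation shows that the $H$-invariant subspaces of $V$ are exactly the subspaces $Z\leq U$ and the subspaces $W\oplus Z$ with $Z\leq U$. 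In particular $\mathcal{S}(V,H)^*\neq\emptyset$, as $W\in\mathcal{S}(V,H)^*$. One then checks that $\bigcap_{W'\in\mathcal{S}(V,H)^*}\stab_G(W')$ equals the group of all $\mathrm{diag}(A,\lambda I_{n-m})$ with $A\in\mathrm{GL}(m,q)$ and $\lambda\in\F_q^{*}$: the inclusion $\supseteq$ is immediate, and $\subseteq$ follows because such a $g$ must fix $W$, must fix $U$, and must fix every line of $U$. As $q>2$ this group strictly contains $H$, so $\bigcap_{W'\in E}\stab_G(W')\supseteq\bigcap_{W'\in\mathcal{S}(V,H)^*}\stab_G(W')\supsetneq H$ for every $E\subseteq\mathcal{S}(V,H)^*$. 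Hence $\Psi'(G,H)$ is the whole power set of the non-empty set $\mathcal{S}(V,H)^*$, so $\sum_{E\in\Psi'(G,H)}(-1)^{|E|}=0$ by Remark~\ref{BinomCoeffSets}, and Theorem~\ref{RevShar3.27} gives $\mu_{\widehat{\mathcal{I}}(G,H)}(H,G)=0$. (Equivalently, $\Delta_1$ is a non-empty full simplex, whence $\tilde\chi(\Delta_1)=0$.)

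For (II) — the technical core, treated fully in \cite{DGthesis}; here one takes $2\leq m\leq n-1$ — I would argue class by class. Classes $\mathcal{C}_4$ and $\mathcal{C}_7$ are empty, since the prime $n$ admits no factorization $n=ab$ or $n=a^t$ with $a,b,t\geq 2$; and $\mathcal{C}_5$ is empty because $q=p$ has no proper subfield. For $\mathcal{C}_2$: primality of $n$ forces $1$-dimensional blocks, so such a subgroup is conjugate to the monomial group $\mathrm{GL}_1(q)\wr\Sym(n)$, which cannot contain $H$ because $\mathrm{GL}(m,q)$ (with $m\geq 2$) has elements fixing no line of $W$. For $\mathcal{C}_3$: primality of $n$ forces the field-extension subgroups to be conjugate to $\Gamma\mathrm{L}(1,q^n)$, which is solvable, whereas $H\supseteq\mathrm{SL}(m,q)$ is non-solvable for $m\geq 2$, apart from a short list of small pairs $(m,q)$ that are cut down further by the hypotheses and disposed of directly. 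For $\mathcal{C}_6$: primality of $n$ forces the subgroup to normalize an extraspecial group of order $n^{3}$, of shape $(q-1)\circ n^{1+2}.\mathrm{Sp}(2,n)$, into which $\mathrm{SL}(m,q)$ does not embed (again up to a couple of small cases). For $\mathcal{C}_8$: $q=p$ is not a square, which kills the unitary subgroup; $\mathrm{Sp}$ and $\mathrm{O}^{\pm}$ require $n$ even, impossible for $n$ an odd prime; and for $\mathrm{O}(n,q)$ with $n$ odd, no conjugate of $H$ preserves a non-degenerate quadratic form, because $\mathrm{GL}(W)$-invariance of the associated bilinear form would force $W$ into its radical. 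The hypothesis that $n-m+1$ be prime is used in the fine analysis of the borderline configurations that survive these crude estimates, and in verifying that $f_{G,n,p}(H)$ really collapses to its $\mathcal{C}_1$-term once classes $\mathcal{C}_2,\dots,\mathcal{C}_8$ are void for $H$.

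I expect the main obstacle to be precisely part (II): although each geometric class is dealt with by a single structural observation combined with the arithmetic hypotheses, the finitely many exceptional pairs $(m,q)$ and $(n,q)$ left over must be eliminated one at a time, and one must track carefully the exact shape of Shareshian's function $f$ from \cite{shareshian96} to be sure it reduces to the $\mathcal{C}_1$-term in the situation considered. Part (I), by contrast, is short and follows directly from Theorem~\ref{RevShar3.27}.
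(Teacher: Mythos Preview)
The paper does not prove Theorem~\ref{esempietto}: it explicitly states ``We do not give in this context the details of the proof of the following Theorem, which needs many technical arguments. In \cite{DGthesis}, all the details are given.'' So there is no in-paper proof to compare your proposal against.

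That said, your outline is consistent with the framework the paper sets up, and Part~(I) is correct and essentially complete: the description of $\mathcal{S}(V,H)^*$ is right, the identification of $\bigcap_{W'\in\mathcal{S}(V,H)^*}\stab_G(W')$ with the group $\{\mathrm{diag}(A,\lambda I_{n-m}):A\in\GL(m,q),\,\lambda\in\F_q^{*}\}$ is verified by the argument you give, and the conclusion via Theorem~\ref{RevShar3.27} and Remark~\ref{BinomCoeffSets} is exactly how the paper's machinery is meant to be used. Note that Part~(I) does not use the hypothesis that $n-m+1$ is prime; only $q>2$ is needed there.

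Part~(II) is, as you yourself say, a sketch. The class-by-class elimination you outline is the natural strategy and the individual observations (primality of $n$ killing $\mathcal{C}_4$, $\mathcal{C}_7$; $q=p$ killing $\mathcal{C}_5$; solvability of $\Gamma\mathrm{L}(1,q^n)$; etc.) are sound, but the residual small cases and the precise role of the hypothesis ``$n-m+1$ prime'' are left unresolved. Since the paper itself defers all of this to \cite{DGthesis}, that is not a defect relative to the paper --- but you should be aware that your sketch does not actually pin down where $n-m+1$ prime enters, and also that the boundary case $m=1$ (allowed by the hypotheses, since then $n-m+1=n$ is prime) falls outside your $m\geq 2$ arguments for $\mathcal{C}_2$, $\mathcal{C}_3$, $\mathcal{C}_6$.
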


In general, we do not have much information about the ninth class.
Just to give an example, we considered the groups of low dimension studied by Schr\"oder in her PhD thesis (\cite{schroder15}), and we saw that in dimension $n=13$ also class $\mathcal{C}_9(G,H)$ is empty for $p>5$. In this case, $\mu(H,G)=0$. \\

All the details and  data needed to prove Theorem 5.1 are contained in \cite{DGthesis} and are available if requested.

\end{document}